\theoremstyle{plain}
\newtheorem{theorem}{Theorem}[section]
\newtheorem*{theorem*}{Theorem}
\newtheorem{lemma}[theorem]{Lemma}
\newtheorem{proposition}[theorem]{Proposition}
\newtheorem{corollary}[theorem]{Corollary}
\newtheorem{problem}[theorem]{Problem}
\theoremstyle{definition}
\newtheorem*{remark*}{Remark}
\newcommand{\scC}{\mathscr{C}}
\newcommand{\scL}{\mathscr{L}}
\newcommand{\scT}{\mathscr{T}}
\newcommand{\scU}{\mathscr{U}}
\newcommand{\bbE}{\mathbb{E}}
\newcommand{\bbP}{\mathbb{P}}
\newcommand{\F}{\mathbb{F}}
\def\a{\alpha}
\def\d{\delta}
\def\l{\ell}
\newcommand{\maxcut}{\operatorname{maxcut}}
\newcommand{\kissat}{\texttt{kissat}}
\newcommand{\glucose}{\texttt{glucose}}
\begin{document}

\setstretch{1.27}

\title{Some remarks on Folkman graphs for triangles}

\author{Eion Mulrenin}
\address{Department of Mathematics, Emory University, Atlanta, GA, 30322, USA}
\email{eion.mulrenin@emory.edu}

\author{Steven van Overberghe}
\address{Department of Mathematics, Computer Science, and Statistics, Ghent University, 9000 Ghent, Belgium}
\email{steven.vanoverberghe@ugent.be}

\thanks{{\it Keywords}. Ramsey theory, Folkman's theorem, finite geometry.}


\begin{abstract}
    Folkman's theorem asserts the existence of graphs $G$ which are $K_4$-free, but which have the property that every two-coloring of $E(G)$ contains a monochromatic triangle.
    The quantitative aspects of $f(2,3,4)$, the least $n$ such that there exists an $n$-vertex graph with both properties above, are notoriously difficult;
    a series of improvements over the span of two decades witnessed the solution to two \$100 Erd\H{o}s problems, and the current record due to Lange, Radziszowski, and Xu now stands at $f(2,3,4) \leq 786$,
    with another \$100 problem of Graham asking for a proof that $f(2,3,4) < 100$.

    In this paper, we study Folkman-like properties of a sequence $H_q$ of finite geometric graphs constructed using Hermitian unitals in projective planes and present some evidence that the graph $H_3$, which has 63 vertices, might contain a Folkman graph as a proper subgraph. 
    More precisely, we first prove that for all prime powers $q \geq 3$, there exists a system $\mathscr{T}_q$ of triangles in $H_q$ such that no four span a $K_4$ in $H_q$, but every two-coloring of $E(H_q)$ induces a monochromatic triangle in $\mathscr{T}_q$.
    We then show that a certain random alteration of $H_q$ which destroys all of its $K_4$'s will, for large $q$, maintain the Ramsey property with high probability.
\end{abstract}

\maketitle


\section{Introduction}
\label{section: intro}

Resolving a problem of Erd\H{o}s and Hajnal, Folkman~\cite{F70} constructed, for all $s \geq 3$, graphs $G$ which are $K_{s+1}$-free, but with the property that every two-coloring of $E(G)$ contains a monochromatic copy of $K_s$.
The latter phenomenon is customarily denoted by $G \longrightarrow K_s$, and we will use this notation throughout the paper.
HFolkman's theorem was then extended several years later by Ne\v{s}et\v{r}il and R\"odl~\cite{NR77} to edge-colorings with an arbitrary number of colors.
For $r$ and $s < t$ positive integers, graphs which are $K_t$-free but in which every $r$-edge-coloring induces a monochromatic $K_s$ are usually called {\it $(r, s, t)$-Folkman graphs}, and the least $n$ for which there exists an $(r,s,t)$-Folkman graph on $n$ vertices is customarily denoted $f(r,s,t)$.

Folkman numbers $f(r,s,t)$ are notoriously difficult.
Indeed, not only is the value of $f(2,3,4)$---the smallest nontrivial combination of parameters one could fix---unknown, but for some time, the best upper bounds were enormous (greater than a ten-times iterated exponential).
Erd\H{o}s initially offered \$100 for a proof that $f(2,3,4) \leq 10^{10}$, and this was resolved by Spencer~\cite{S88}, who optimized a random graph approach by Frankl and R\"odl~\cite{FR86}.
Erd\H{o}s then offered another \$100 for a proof that $f(2,3,4) \leq 10,000$, which was 
shown
independently by Lu~\cite{L07} and by Dudek and R\"odl~\cite{DR08-1, DR08-2} via computer-assisted proofs.
The current best bounds, due to Bikov and Nenov~\cite{BN20} and Lange, Radziszowski, and Xu~\cite{LRX14}, are still quite far apart:
\begin{equation*}
    21 \leq f(2,3,4) \leq 786,
\end{equation*}
and 
Graham has set yet another \$100 prize for a proof that $f(2,3,4) \leq 100$ 
(see, e.g.,~\cite[Problem 1.48]{G-AIM}).

More generally, the constructions of Folkman and of Ne\v{s}et\v{r}il and R\"odl are both iterative in nature, and so the upper bound on $f(r,s,t)$ was, for quite some time, utterly enormous as a function of $r$, $s$ and $t$.
Several years ago, however, R\"odl, Ruci\'nski, and Schacht~\cite{RRS17} used random graphs and hypergraph containers to show the (non-constructive) exponential-type bound $f(r,s,s+1) \leq \exp{c(s^4 \log s + s^3 r \log r)}$ for all $s \geq 3$ and $r \geq 2$, where $c > 0$ is a (large) absolute constant.
This was subsequently improved to the current state of the art
\begin{equation}
\label{eq: f(r,s,s+1)}
    f(r,s,s+1) \leq \exp(c rs^3 \log r)
\end{equation}
by Balogh and Samotij~\cite{BS20}, who refined the approach of R\"odl et al.~with an improved container lemma.


In this paper, we study Folkman-like properties of a sequence of finite geometric graphs $H_q$, defined for the parameter $q$ a prime power---see Section~\ref{section: unital} for the details of the construction.
Here and throughout, for a graph $H$ we will denote by $\binom{H}{K_3}$ the family of all triangles in $H$, and for $\scT \subseteq \binom{H}{K_3}$, we will write:
\begin{itemize}
    \item $\scT \nsupseteq K_4$ to signify that no four triangles in $\scT$ span a $K_4$ in $H$;
    \item $H \longrightarrow (K_3)_{\scT}$ to signify that every two coloring of $E(H)$ yields a monochromatic triangle in $\scT$.
\end{itemize}
Note that every Folkman graph possesses both of these properties---just take $\scT$ to be the system of all triangles---but that a graph which satisfies both is not necessarily a Folkman graph, since it might contain ``accidental" $K_4$'s which include at least one triangle that is not in $\scT$.

In this notation, our first result is the following.

\begin{theorem}
\label{thm: main-1}
    For all prime powers $q \geq 3$, there exists graph $H_q$ on $q^4-q^3+q^2$ vertices  with a system $\scT_q \subseteq \binom{H_q}{K_3}$ of triangles such that 
    \begin{equation}
    \label{eq: quasi-folkman}
        \scT_q \nsupseteq K_4 \hspace{2cm} \text{and} \hspace{2cm} H_q \longrightarrow (K_3)_{\scT_q}.
    \end{equation}
    Moreover, as $q \to \infty$, every two-coloring of $E(H_q)$ induces at least $(\frac{1}{4}-o(1))|\scT_q|$ monochromatic triangles in $\scT_q$.
\end{theorem}

Note that $H_3$ is thus a graph on $63$ vertices satisfying~\eqref{eq: quasi-folkman}.
The proof of Theorem~\ref{thm: main-1} for all $q \geq 4$ is computer-free, while the proof for $q=3$ is computer-assisted and we include its details in the appendix.
We would also like to remark that the fraction $1/4$ is best possible, as witnessed by a uniform random coloring.

Our next result shows that for large $q$, $H_q$ contains (true) Folkman graphs as subgraphs.

\begin{theorem}
\label{thm: main-2}
    There exists a prime power $q_0$ such that the following holds.
    For all prime powers $q \geq q_0$, one may delete a subset of edges of $H_q$ to make it $K_4$-free, but preserve the property that any two-coloring of the remaining edges yields a monochromatic $K_3$. 
\end{theorem}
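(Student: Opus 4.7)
The plan is to deduce Theorem~\ref{thm: main-2} from Theorem~\ref{thm: main-1} by deleting a small set of edges $E^* \subseteq E(H_q)$ that kills every $K_4$ of $H_q$ but leaves most of $\scT_q$ intact. The starting observation is that every $K_4$ in $H_q$ must contain at least one triangle outside $\scT_q$: otherwise its four triangular faces would all lie in $\scT_q$, directly contradicting property~(1) of the quasi-Folkman condition. Call such outside triangles \emph{bad} and write $\scT_q^c$ for the set of bad triangles. An edge set that hits every bad triangle automatically hits every $K_4$.

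I would then argue as follows. Suppose $E^*$ also has the property that fewer than $(\tfrac{1}{4} - \ve)|\scT_q|$ triangles of $\scT_q$ contain an edge of $E^*$, for some fixed $\ve > 0$. Let $G = H_q \setminus E^*$: by the previous paragraph $G$ is $K_4$-free. For any two-coloring $c$ of $E(G)$, extend $c$ arbitrarily to all of $E(H_q)$ and apply Theorem~\ref{thm: main-1} to obtain at least $(\tfrac{1}{4} - o(1))|\scT_q|$ monochromatic elements of $\scT_q$. By the hypothesis on $E^*$, at least one of them avoids $E^*$ entirely, so it is monochromatic already for $c$ on $E(G)$. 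This makes $G$ a Folkman graph, proving the theorem.

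The existence of such an $E^*$ I would establish probabilistically. For each $T \in \scT_q^c$, independently and uniformly choose one of its three edges and add it to $E^*$. The hitting property is automatic. For each edge $e$ let $b(e)$ and $t(e)$ be the number of bad and of $\scT_q$-triangles through $e$, respectively. Then $\Pr[e \in E^*] \leq b(e)/3$, and a union bound over the three edges of each $\scT_q$-triangle gives
\begin{equation*}
    \mathbb{E}\bigl|\{T \in \scT_q : T \cap E^* \neq \emptyset\}\bigr| \;\leq\; \frac{1}{3}\sum_{e \in E(H_q)} t(e)\,b(e).
\end{equation*}
By Markov it therefore suffices to prove the counting estimate $\sum_e t(e)\,b(e) = o(|\scT_q|)$ as $q \to \infty$.

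The main obstacle, and the one demanding real engagement with the Hermitian-unital geometry of Section~\ref{section: unital}, is this counting bound. It should follow from two quantitative facts: that $|\scT_q|$ grows polynomially of the expected order in $q$, and that bad triangles are globally rare (i.e.\ $|\scT_q^c| = o(|\scT_q|)$) and locally concentrated, in the sense that $b(e)$ is bounded by an absolute constant outside a negligible fraction of edges. Each bad triangle should correspond to a specific degenerate incidence configuration in the unital, so the count of bad triangles and of their overlaps with $\scT_q$-edges can be attacked by the same finite-geometry and character-sum tools used in the proof of Theorem~\ref{thm: main-1}. Any edges that lie in an abnormally large number of bad triangles, if they exist, can be excised deterministically before the random step to ensure the bound goes through.
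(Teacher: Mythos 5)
Your approach runs into a fundamental quantitative obstruction that cannot be patched: the ``bad'' (degenerate) triangles are not rare, and $b(e)$ is far from bounded. Each edge of $H_q$ lies in exactly one clique $C\in\scC_q$ (Proposition~\ref{prp: Hq}(3)), so it lies in exactly $q^2-2$ degenerate triangles, i.e.\ $b(e)=q^2-2$ for \emph{every} edge. Consequently the total number of degenerate triangles is $(q^3+1)\binom{q^2}{3}=\Theta(q^9)$, which is of the same order as $|\scT_q|$, not $o(|\scT_q|)$. With $b(e)=q^2-2$, your bound $\Pr[e\in E^*]\le b(e)/3$ is vacuous (it exceeds $1$ for all $q\ge 2$), and in fact $\Pr[e\notin E^*]=(2/3)^{q^2-2}$ is exponentially small: your random process deletes essentially every edge of $H_q$, and the expected number of $\scT_q$-triangles avoiding $E^*$ is exponentially small rather than a $(1/4-\ve)$-fraction of $|\scT_q|$. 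The quantity you propose to bound satisfies $\sum_e t(e)b(e)\approx |E(H_q)|\cdot q^4 = \Theta(q^{11})$, which is $\Theta(q^2\cdot|\scT_q|)$.

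There is a deeper structural reason no version of this argument can succeed that treats the deletion as a small perturbation of Theorem~\ref{thm: main-1}. To make $H_q$ $K_4$-free you must make the subgraph induced by each $C\in\scC_q$ triangle-free, and this necessarily removes a constant fraction of the edges of each $C$; moreover, once you have done so, the guarantee of Theorem~\ref{thm: main-1} no longer applies, because a bad choice of triangle-free replacement lets an adversary coloring zero out the monochromatic triangles entirely. Concretely, if each clique is replaced by a complete bipartite graph (the most edges you can keep), then within every spanning $(q+1)$-clique in some $N(v)$ the surviving edges form a complete bipartite graph, and the adversary can choose $\Delta$ so that the auxiliary coloring $\chi$ on $N(v)$ matches that bipartition, forcing $R_\Delta(v)+B_\Delta(v)=0$. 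This is exactly why the paper's proof replaces each clique not by a maximal triangle-free subgraph but by a random blowup of a fixed triangle-free graph $F$ with $\mathrm{maxcut}(F)<\tfrac23|E(F)|$ (such as Alon's pseudorandom triangle-free graphs), and then reruns the Goodman-type counting inside $H_q^*$ directly, using a concentration argument (Lemma~\ref{lem: blowup} via McDiarmid) to control the degree of each blown-up vertex inside each spanning clique. The max-cut hypothesis on $F$ is what makes the final Goodman-type inequality positive; it is not a technicality but the load-bearing idea, and it has no analogue in your proposal.
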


\noindent The bound on $f(2,3,4)$ given by Theorem~\ref{thm: main-2} is quite poor, roughly $2^{280}$.
However, in light of these results, it seems plausible that $H_3$ contains a genuine Folkman graph as a proper subgraph.\\

\noindent {\bf Organization.}
The remainder of the paper is organized as follows: 
in Section~\ref{section: unital}, we define the graphs $H_q$ and highlight the properties that we will need;
in Section~\ref{section: proof of Thm 1}, we prove Theorem~\ref{thm: main-1} for $q \geq 4$, and in Section~\ref{section: proof of Thm 2}, we prove Theorem~\ref{thm: main-2}; in Section~\ref{section: conclusion}, we offer concluding remarks and discuss some open problems; and finally, in the Appendix, we include the details of our computational proof of the $q=3$ case of Theorem~\ref{thm: main-1}.


\section{Preliminaries on the Hermitian unital}
\label{section: unital}

\subsection{The intersection graph $H_q$}
Fix a prime power $q$, and let $PG(2,q^2)$ be the projective plane of order $q^2$.
A {\it classical} or {\it Hermitian} unital $\scU_q$ in $PG(2,q^2)$ is a set of points corresponding to the following set of one-dimensional subspaces of $\F_{q^2}^3$:
\begin{equation*}
    \scU_q = \{ \langle X, Y, Z \rangle: X^{q+1} + Y^{q+1} + Z^{q+1} = 0 \}.
\end{equation*}
Here, as usual, $\langle X, Y, Z \rangle$ denotes the one-dimensional subspace generated by the nonzero vector $(X,Y,Z) \in \F_{q^2}^3$.

A simple counting argument using the norm function shows that $|\scU_q| = q^3+1$.
Moreover, it can be shown that every line of $PG(2,q^2)$ intersects $\scU_q$ in either exactly one point or exactly $q+1$ points---see
Barwick and Ebert~\cite[\S 2.1]{BE} 
for proofs of these facts.
Lines of the former type are called {\it tangents}, while lines of the latter type are called {\it secants}.
Hereafter, we will denote the set of secants of $\scU_q$ by $\scL_q$.

We will be interested in the partial linear space $(\scU_q, \scL_q)$ composed of the points of the unital and the secant lines.
Perhaps one of the most intriguing features of $(\scU_q, \scL_q)$ is that it does not contain four lines in general position, somtimes referred to as the {\it O'Nan configuration}~\cite{O72}; 
see Figure~\ref{fig: o'nan}.

\begin{figure}
    \centering
    \begin{tikzpicture}

        \coordinate (A) at (-1,-1.5);
        \coordinate (B) at (-2,-1);
        \coordinate (C) at (-1,2.5);
        \coordinate (D) at (1,4.5);
        \coordinate (E) at (3,4.5);
        \coordinate (F) at (5,2.5);
        \coordinate (G) at (6,-1);
        \coordinate (H) at (5,-1.5);

        \draw (A) -- (E);        
        \draw (B) -- (F);        
        \draw (C) -- (G);        
        \draw (D) -- (H);        

        \foreach \point in {
            (0,0), (1,1.5), (2,3), (3,1.5), (4,0), (2,1)}
            {\filldraw[black] \point circle (2pt);
            }
    \end{tikzpicture}
    \caption{The four lines and six points form an O'Nan configuration.}
    \label{fig: o'nan}
\end{figure}
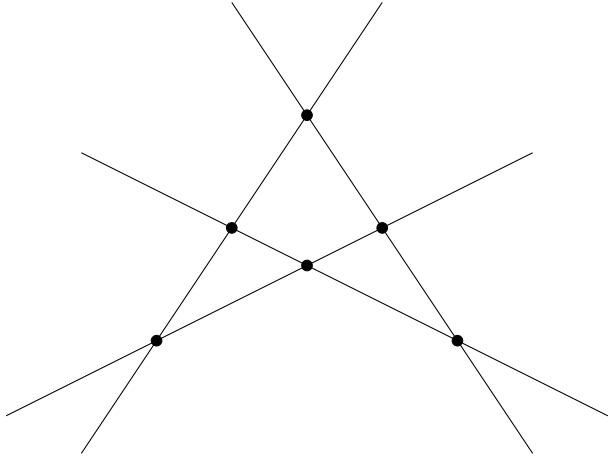

The Hermitian unital has been used recently to make progress on some longstanding problems, including the improved lower bound on the off-diagonal Ramsey number $r(4,t)$ by Mattheus and Verstra\"ete~\cite{MV24} and work by several authors on the Erd\H{o}s--Rogers problem~\cite{JS25, MV25}.
Here is a collection of basic facts we will need about the partial linear space $(\scU_q, \scL_q)$, proven originally by O'Nan~\cite{O72} and more recently by Mattheus and Verstra\"ete~\cite{MV24} using elementary counting arguments.

\begin{proposition}
\label{prp: unital}
    Let $(\scU_q, \scL_q)$ be the partial linear space formed by a Hermitian unital and its secant lines over $PG(2,q^2)$.
    We have the following.
    \begin{itemize}
        \item[i.] $|\scU_q| = q^3 + 1$ and $|\scL_q| = q^4 - q^3 + q^2$.
        \item[ii.] Every point of $\scU_q$ has $q^2$ secants passing through it and one tangent, and every secant contains $q+1$ points from $\scU_q$.
    \end{itemize}
    As a consequence,
    \begin{itemize}
        \item[iii.] For every pair of secants which intersect in $\scU_q$, there are $2q^2-2$ other secants which intersect them both in $\scU_q$.
        \item[iv.] For every pair of secants which intersect outside of $\scU_q$, there are $(q+1)^2$ other secants which intersect them both in $\scU_q$.
    \end{itemize}
\end{proposition}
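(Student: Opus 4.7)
The plan is to dispatch the four parts in order; parts (i), (iii), and (iv) reduce to double-counting once the structural input of (ii) is established. For part (i), I would compute $|\scU_q|$ via the norm map $N\colon \F_{q^2}^* \to \F_q^*$, $N(x) = x^{q+1}$, which is surjective with all fibers of size $q+1$. The number of nonzero triples $(X,Y,Z) \in \F_{q^2}^3$ with $N(X) + N(Y) + N(Z) = 0$ is a short case analysis on how many coordinates vanish (the only nontrivial cases being ``exactly one zero'' and ``none zero''), giving $(q^2 - 1)(q^3 + 1)$; dividing by $q^2 - 1$ to projectivize yields $q^3 + 1$. The value of $|\scL_q|$ then follows from part (ii) by the incidence double count $|\scL_q|(q+1) = (q^3 + 1) q^2$.

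For part (ii), the key structural input is the Hermitian polarity: for $P = \langle p_1, p_2, p_3 \rangle \in \scU_q$, the line cut out by $p_1^q X_1 + p_2^q X_2 + p_3^q X_3 = 0$ is the unique tangent at $P$. I would verify both that this line meets $\scU_q$ only at $P$ (using $(a+b)^q = a^q + b^q$ in characteristic $p$, together with the defining equation of $\scU_q$) and that any other line through $P$ meets $\scU_q$ in exactly $q+1$ points, by parametrizing the line, substituting into the Hermitian form, and reducing to an $\F_q$-valued norm equation whose nonzero solutions form a coset of the norm-one subgroup of $\F_{q^2}^*$.

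Parts (iii) and (iv) then both hinge on a single observation: since every line of $PG(2,q^2)$ meets $\scU_q$ in either $1$ or $q+1$ points, every line through two distinct unital points is automatically a secant. For part (iii), with $\ell_1, \ell_2$ meeting at $P \in \scU_q$, I would partition the secants $m \neq \ell_1, \ell_2$ meeting both $\ell_i$ in $\scU_q$ into those through $P$ (of which there are $q^2 - 2$ by part (ii)) and those not; a secant of the latter type is uniquely determined by a choice of $A \in (\ell_1 \cap \scU_q) \setminus \{P\}$ and $B \in (\ell_2 \cap \scU_q) \setminus \{P\}$, giving $q \cdot q$ further secants, for a total of $2q^2 - 2$. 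For part (iv), where $Q := \ell_1 \cap \ell_2 \notin \scU_q$, no such $m$ can pass through $Q$, so each $m$ is determined by an unconstrained pair in $(\ell_1 \cap \scU_q) \times (\ell_2 \cap \scU_q)$, yielding $(q+1)^2$.

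The main obstacle is the rigorous verification of part (ii), and in particular the uniqueness of the tangent line, which rests on the Hermitian polarity rather than on pure counting; once parts (i) and (ii) are in hand, parts (iii) and (iv) fall out cleanly from the fact that all secants have exactly $q+1$ unital points.
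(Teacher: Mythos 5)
The paper does not actually prove Proposition~\ref{prp: unital}: it cites Barwick--Ebert for $|\scU_q|=q^3+1$ and the $1$-or-$(q+1)$ line-intersection dichotomy, and attributes parts (iii) and (iv) to O'Nan and Mattheus--Verstra\"{e}te. So there is no in-paper argument to compare against; your task here is to supply the standard proof, and you have done so correctly.

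Your outline checks out in detail. For (i), the case analysis on vanishing coordinates gives $3(q^2-1)(q+1)$ solutions with exactly one zero coordinate and $(q^2-1)\bigl((q^2-1)-(q+1)\bigr)(q+1)$ with none, which sums to $(q^2-1)(q^3+1)$ as you claim, and the double count $|\scL_q|(q+1)=(q^3+1)q^2$ then gives $|\scL_q|=q^4-q^3+q^2$. For (ii), the Hermitian polarity $P\mapsto P^\perp$ is indeed the clean way to identify the unique tangent at $P$ and to reduce every other line through $P$ to a norm equation with exactly $q$ further solutions; this also forces the global $1$-or-$(q+1)$ dichotomy once one observes that external lines are ruled out by the incidence count $(q^3+1)(q^2+1)=(q^3+1)\cdot 1+(q^4-q^3+q^2)(q+1)$, which exhausts all $q^4+q^2+1$ lines. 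In (iii) your partition into secants through $P$ and secants off $P$ is correct, and the map from a secant $m$ off $P$ to the pair $(m\cap\ell_1,\,m\cap\ell_2)\in\bigl((\ell_1\cap\scU_q)\setminus\{P\}\bigr)\times\bigl((\ell_2\cap\scU_q)\setminus\{P\}\bigr)$ is a bijection because two unital points always span a secant and $\ell_1\cap\ell_2=\{P\}$ forces $A\neq B$; hence $(q^2-2)+q^2=2q^2-2$. In (iv) the same bijection is now unconstrained since $\ell_1\cap\ell_2\notin\scU_q$, giving $(q+1)^2$. One cosmetic point: in (iii) it is worth saying explicitly that the $q^2-2$ secants through $P$ count toward the total because they meet both $\ell_1$ and $\ell_2$ at $P\in\scU_q$, which is what ``intersect them both in $\scU_q$'' requires.
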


\noindent Let $H_q$ denote the intersection graph of $(\scU_q, \scL_q)$, i.e., the vertex set of $H_q$ is $\{v_\l: \l \in \scL_q\}$ and $v_{\l_1} \sim v_{\l_2}$ in $H_q$ if and only if $\l_1 \cap \l_2 \in \scU_q$.
The data in Proposition~\ref{prp: unital} and the absence of O'Nan configurations in $(\scU_q, \scL_q)$ immediately translate into the following graph theoretic statistics about $H_q$.

\begin{proposition}
\label{prp: Hq}
    The graph $H_q$ is an $n$-vertex $d$-regular graph with the following properties.
    \begin{itemize}
        \item[1.] $n = q^4-q^3+q^2$ and $d = (q+1)(q^2-1) = q^3 + q^2 - q -1$.
        \item[2.] There is a set $\scC_q$ of $q^3+1$ (maximal) cliques of order $q^2$, every two of which share exactly one vertex.
        \item[3.] Each vertex is in exactly $q+1$ cliques from $\scC_q$, and every edge in exactly one clique from $\scC_q$.
        \item[4.] Every copy of $K_4$ in $H_q$ has at least three of its vertices in a clique from $\scC_q$.
    \end{itemize}
    Furthermore, $H_q$ is strongly regular with the following: 
    \begin{itemize}
        \item[5.] adjacent vertices have $2q^2 - 2$ common neighbors.
        \item[6.] non-adjacent vertices have $(q+1)^2 = q^2 + 2q + 1$ common neighbors.
    \end{itemize}
\end{proposition}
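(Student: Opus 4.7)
The plan is to translate each claim of Proposition~\ref{prp: Hq} directly from Proposition~\ref{prp: unital} and from the O'Nan-free property of $(\scU_q, \scL_q)$; most items amount to rewriting incidence data as graph data, with item 4 being the only one that genuinely uses the unital geometry.

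For items 1 and 2, I would take $n = |\scL_q|$ from Proposition~\ref{prp: unital}(i). For the degree, the neighbors of $v_\ell$ in $H_q$ are exactly the secants meeting $\ell$ at a point of $\scU_q$; by part (ii), $\ell$ contains $q+1$ points of $\scU_q$ and each lies on $q^2$ secants, so $d = (q+1)(q^2 - 1)$, with no double counting since two distinct lines of $PG(2,q^2)$ meet in a unique point. For $\scC_q$, for each $P \in \scU_q$ I would define $C_P = \{v_\ell : P \in \ell \in \scL_q\}$; by (ii) $|C_P| = q^2$ and any two secants through $P$ meet at $P \in \scU_q$, so $C_P$ is a clique. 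For distinct $P, Q \in \scU_q$ the line $PQ$ contains two points of $\scU_q$, hence must be a secant (tangents have only one), giving $C_P \cap C_Q = \{v_{PQ}\}$. Thus $|\scC_q| = |\scU_q| = q^3 + 1$ and these cliques pairwise share exactly one vertex.

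Item 3 again follows from (ii): the number of cliques of $\scC_q$ containing $v_\ell$ is $|\ell \cap \scU_q| = q+1$, and every edge $\{v_{\ell_1}, v_{\ell_2}\}$ corresponds to a unique point $\ell_1 \cap \ell_2 \in \scU_q$ and so lies in exactly one $C_P$. The central step is item 4: suppose four secants $\ell_1,\ldots,\ell_4$ induce a $K_4$ in $H_q$ with no three of the corresponding vertices in a common clique of $\scC_q$. The latter condition means no three of the lines pass through a common point of $\scU_q$, and adjacency in $H_q$ forces all six pairwise intersections to lie in $\scU_q$. One checks these six points are distinct, since any coincidence $\ell_i \cap \ell_j = \ell_k \cap \ell_m = P$ with $\{i,j\} \neq \{k,m\}$ would place three lines through $P$. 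The resulting four lines and six unital points form an O'Nan configuration, contradicting the classical fact that $(\scU_q, \scL_q)$ admits none.

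Finally, items 5 and 6 are direct translations of Proposition~\ref{prp: unital}(iii) and (iv): two adjacent vertices of $H_q$ correspond to two secants meeting on $\scU_q$, giving $2q^2 - 2$ common neighbors; two non-adjacent distinct vertices correspond to two secants that still meet in $PG(2,q^2)$ (any two lines do) but outside $\scU_q$, giving $(q+1)^2$ common neighbors. The only conceptual obstacle is item 4, and even there the O'Nan-free property reduces the proof to a single line; the remaining items are essentially bookkeeping.
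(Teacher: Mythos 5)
Your proof is correct and follows exactly the route the paper intends: the paper states that Proposition~\ref{prp: Hq} follows "immediately" by translating Proposition~\ref{prp: unital} and O'Nan-freeness into graph-theoretic language, and you have simply supplied the bookkeeping the authors left implicit, including the distinctness of the six intersection points in item 4.
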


\subsection{The random block construction and non-degenerate triangles}
\label{section: random-block-constr}

The so-called {\it random block construction} takes a graph like $H_q$ as its input and randomly replaces each maximal clique by a triangle-free graph, typically just a complete bipartite graph.
The idea is that, by Proposition~\ref{prp: Hq}(4), the resulting graph is (deterministically) $K_4$-free, but might still possess interesting properties (e.g., few large independent sets~\cite{MV24}, triangles in all large subsets~\cite{JS25, MV25}, etc.) with positive probability.
This idea was introduced by Brown and R\"odl~\cite{BR03}, and has been applied by a number of authors to much success---see~\cite{MV24} and the references therein for a nice history of this method.


By design, when we apply the random block construction to $H_q$, the triangles wholly contained in a fixed clique of $\scC_q$, which come from three lines all incident to a common point, will be destroyed and the graph will become $K_4$-free. 
We will call such triangles {\it degenerate}, while triangles which are not entirely contained in a clique from $\scC_q$ (which come from three lines intersecting in three distinct points, and thus potentially survive the deletion process of the random block construction) we will call {\it non-degenerate}---see Figure~\ref{fig: triangles} for an illustration.
The set of non-degenerate triangles in $H_q$ will be our distinguished subset $\scT_q$.
Note that each vertex $v_\l \in V(H_q)$ is contained in $(q^3-q)\binom{q+1}{2}$ non-degenerate triangles, as there are $\binom{q+1}{2}$ pairs of secants which pass through both $\l$ and through each one of the $q^3-q$ points of $\scU_q \setminus \l$;
thus,
\begin{equation}
\label{eqn: non-degenerate triangles}
    |\scT_q| = \frac{1}{3}|V(H_q)|(q^3-q)\binom{q+1}{2} = \frac{1}{6} (q^4-q^3+q^2)(q^3-q)(q+1)q
\end{equation}

\begin{figure}
    \centering
    \begin{subfigure}[b]{.45\textwidth}  
        \begin{tikzpicture}
            \coordinate (A) at (-1,-1);
            \coordinate (B) at (-2,0);
            \coordinate (C) at (0,2);
            \coordinate (D) at (2,2);
            \coordinate (E) at (4,0);
            \coordinate (F) at (3,-1);

            \draw (A) -- (D) node[at end, above] {$\ell_3$};        
            \draw (B) -- (E) node[near start, above] {$\ell_1$};        
            \draw (C) -- (F) node[at start, above] {$\ell_2$};        

            \foreach \point in {
                (0,0), (1,1), (2,0)}
                {\filldraw[black] \point circle (2pt);
                }
        \end{tikzpicture}
        \centering
        \caption{The vertices $v_{\l_1}, v_{\l_2}$, and $v_{\l_3}$ will form a non-degenerate triangle in $H_q$.}
        \label{fig: non-degenerate triangles}
    \end{subfigure}
    \hfill
    \begin{subfigure}[b]{.45\textwidth}  
        \begin{tikzpicture}
            \coordinate (A) at (-0.5,-1);
            \coordinate (B) at (-3,0);
            \coordinate (C) at (-1,2);
            \coordinate (D) at (1,2);
            \coordinate (E) at (3,0);
            \coordinate (F) at (0.5,-1);

            \draw (A) -- (D) node[at end, above] {$\ell_3$};        
            \draw (B) -- (E) node[near start, above] {$\ell_1$};        
            \draw (C) -- (F) node[at start, above] {$\ell_2$};        

            \foreach \point in {
                (0,0)}
                {\filldraw[black] \point circle (2pt);
            }
        \end{tikzpicture}
        \centering
        \caption{The vertices $v_{\l_1}, v_{\l_2}$, and $v_{\l_3}$ will form a degenerate triangle in $H_q$.}
        \label{fig: degenerate triangles}
    \end{subfigure}
    \caption{ }
    \label{fig: triangles}
\end{figure}

We make note of the following proposition, which follows immediately from the fact that if the four triangles in a $K_4$ on vertices $v_{\l_1}, v_{\l_2}, v_{\l_3}, v_{\l_4} \in V(H_q)$ were all non-degenerate, then the secants $\l_1, \l_2, \l_3, \l_4$ would form an O'Nan configuration in $(\scU_q, \scL_q)$.

\begin{proposition}
\label{prp: non-degenerate triangles}
    No four triangles in $\scT_q$ induce a $K_4$ in $H_q$.
\end{proposition}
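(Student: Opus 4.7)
The plan is to prove the contrapositive: if four vertices $v_{\l_1}, v_{\l_2}, v_{\l_3}, v_{\l_4}$ span a $K_4$ in $H_q$ and all four of its triangular faces lie in $\scT_q$, then the four secants $\l_1, \l_2, \l_3, \l_4$ realize an O'Nan configuration in $(\scU_q, \scL_q)$, contradicting the structural property of the Hermitian unital recalled in Section~\ref{section: unital}.

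First, I would unpack the $K_4$ hypothesis using the definition of $H_q$. Adjacency $v_{\l_i} \sim v_{\l_j}$ means $\l_i \cap \l_j \in \scU_q$, and since any two distinct lines in $PG(2,q^2)$ meet in exactly one point, this intersection point $P_{ij}$ is well-defined and lies on the unital for each of the six pairs.

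Next, I would translate the non-degeneracy of each of the four triangular faces. By the definition in Section~\ref{section: random-block-constr}, the triangle $\{v_{\l_i}, v_{\l_j}, v_{\l_k}\}$ is degenerate precisely when $\l_i, \l_j, \l_k$ are concurrent at a common point of $\scU_q$ (equivalently, when its three vertices lie in a common clique of $\scC_q$, since the cliques of $\scC_q$ correspond exactly to pencils of secants through a unital point by Proposition~\ref{prp: Hq}). So non-degeneracy of the face $\{v_{\l_i}, v_{\l_j}, v_{\l_k}\}$ is equivalent to saying the three intersection points $P_{ij}, P_{ik}, P_{jk}$ are pairwise distinct, i.e.\ that no three of the lines $\l_i, \l_j, \l_k$ are concurrent.

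Putting these together, the four secants $\l_1, \l_2, \l_3, \l_4$ are in general position (no three concurrent) and their six pairwise intersection points $P_{ij}$ all lie on $\scU_q$. This is exactly the O'Nan configuration depicted in Figure~\ref{fig: o'nan}. Since the Hermitian unital contains no such configuration (O'Nan~\cite{O72}), we reach a contradiction, completing the proof. The argument is essentially an unpacking of definitions, so there is no serious obstacle; the only point requiring any care is the equivalence in the second paragraph between the $\scC_q$-based definition of degeneracy and the geometric statement about concurrency of secants, which follows directly from Proposition~\ref{prp: Hq}(2)--(3).
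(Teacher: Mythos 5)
Correct, and it is the same argument as in the paper: the paper treats the proposition as an immediate consequence of the observation that four pairwise-intersecting secants spanning only non-degenerate triangles form an O'Nan configuration, which the Hermitian unital forbids. Your write-up simply unpacks that one-line observation in more detail, including the translation between the $\scC_q$-based notion of degeneracy and concurrency of secants.
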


\subsection{Cliques in $H_q$}

The aim of this short subsection is to prove the following lemma, which roughly states that the clique structure of $H_q$ is inherited by the neighborhood of any particular vertex.

\begin{lemma}
\label{lem: nbhd}
    Let $v_\l \in V(H_q)$ be the vertex in $H_q$ corresponding to a secant $\l \in \scL_q$, and let $H_q[N(v_\l)]$ be the subgraph of $H_q$ induced on $N(v_\l)$.
    Then $H_q[N(v_\l)]$ is composed of:
    \begin{itemize}
        \item $q+1$ cliques of order $q^2-1$ which, with $v_\l$, form the maximal cliques in $\scC_q$ containing $v_\l$;
        \item $q^3-q$ cliques of order $q+1$ which lie inside the cliques of $\scC_q$ not containing $v_\l$.
    \end{itemize}
    Moreover, every edge in $H_q[N(v_\l)]$ is contained in exactly one such clique.
\end{lemma}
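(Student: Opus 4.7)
The plan is to translate the structure of $H_q[N(v_\l)]$ into geometric data and then use the two clique-related items of Proposition~\ref{prp: Hq} to assemble the decomposition. Since $\l$ is a secant, by Proposition~\ref{prp: unital}(ii) it meets $\scU_q$ in exactly $q+1$ points $p_1,\dots,p_{q+1}$. For each $p_i$ the $q^2$ secants through $p_i$ form the maximal clique $C_{p_i} \in \scC_q$, so the $q^2-1$ secants through $p_i$ other than $\l$ are pairwise adjacent and each adjacent to $v_\l$. This gives the first family of cliques. They are vertex-disjoint in $N(v_\l)$, since a secant distinct from $\l$ cannot pass through two different $p_i,p_j$ (otherwise it would coincide with $\l$), and together they account for all $(q+1)(q^2-1)=d$ neighbors.

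For the second family, I would parametrize by the $|\scU_q|-(q+1) = q^3-q$ points $p \in \scU_q \setminus \l$. For each such $p$, the clique $C_p \in \scC_q$ consists of the $q^2$ secants through $p$, and I claim $C_p \cap N(v_\l)$ is a clique of order exactly $q+1$: for each $p_i \in \l \cap \scU_q$ the line $\overline{p p_i}$ is a secant of $\scU_q$ (it already contains the two points $p, p_i$ of $\scU_q$, so it cannot be a tangent), lies in $C_p$, and is adjacent to $v_\l$; conversely, any secant through $p$ adjacent to $v_\l$ must meet $\l$ in some $p_i \in \l \cap \scU_q$, and through $p$ and $p_i$ there is a unique line. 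This accounts for $q^3-q$ cliques of order $q+1$.

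For the edge decomposition, let $\{v_{\l_1}, v_{\l_2}\}$ be any edge in $H_q[N(v_\l)]$ and set $p := \l_1 \cap \l_2 \in \scU_q$. By Proposition~\ref{prp: Hq}(3), this edge belongs to a \emph{unique} clique of $\scC_q$, namely $C_p$. If $p \in \l$, then $p = p_i$ for some $i$ and the edge lies in the corresponding first-type clique; otherwise $p \in \scU_q \setminus \l$ and the edge lies in the second-type clique indexed by $p$. Uniqueness is inherited directly from Proposition~\ref{prp: Hq}(3).

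I do not expect a real obstacle here, as the argument is essentially a bookkeeping exercise using Propositions~\ref{prp: unital} and~\ref{prp: Hq}; the only place where one must be slightly careful is in verifying that the line $\overline{p p_i}$ for $p \in \scU_q \setminus \l$ is a secant rather than a tangent, which is immediate from the tangent/secant dichotomy recalled in Section~\ref{section: unital}.
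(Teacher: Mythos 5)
Your proof is correct and follows essentially the same route as the paper's: partition the points of $\scU_q$ into those on $\l$ and those off $\l$, obtain the two families of cliques from each part, and use the fact that any adjacency in $H_q$ comes from a unique intersection point in $\scU_q$ to get the edge decomposition. The only difference is cosmetic: you invoke Proposition~\ref{prp: Hq}(3) explicitly for uniqueness and spell out the converse direction for the second family, whereas the paper leaves these as brief remarks.
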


\begin{proof}
    Consider the points $p_1, \dots, p_{q+1} \in \scU_q$ which lie on $\l$.
    For each one of these points, there exists a maximal clique in $\scC_q$ containing $v_\l$ and $q^2-1$ other vertices---these come from the $q^2-1$ other secants passing through the point---and each of these cliques are pairwise vertex-disjoint aside from their common vertex $v_\l$.

    On the other hand, for every point $p \in \scU_q \setminus \l$, there exactly $q+1$ lines in $PG(2,q^2)$ which pass through both $p$ and one of the $p_i$'s; but since these lines contain two points of $\scU_q$, they are all necessarily secants, and so for each of these $q^3-q$ points, there exists a clique of $q+1$ vertices in $H_q[N(v_\l)]$.

    Finally, since all of the cliques we consider come from distinct points in $\scU_q$, i.e., lie inside cliques of $\scC_q$, no two can intersect in more than one vertex, and since we have considered every point of $\scU_q$, every edge we consider necessarily emerges from two lines intersecting in one of the points.
\end{proof}

In what remains, we will refer to the latter set of cliques as {\it spanning}, and for each $v \in V(H_q)$, we will primarily be interested in the edges of $H_q[N(v)]$ which come from spanning cliques therein.
Indeed, the endpoints of each such edges are precisely the pairs which form a non-degenerate triangle with $v$---see Figure~\ref{fig: non-degenerate triangles}.
With this in mind, for each $v_\l \in V(H_q)$, let $G_{v_\l}$ be the graph on vertex set $N(v_\l)$ with $v_{\l_1} \sim v_{\l_2}$ in $G_{v_\l}$ if and only if $\l_1 \cap \l_2 \in \scU_q \setminus \l$.
By Lemma~\ref{lem: nbhd}, $G_{v_\l}$ is the union of $q^3-q$ edge-disjoint spanning cliques of order $q+1$.

\subsection{Notation}
\label{subsection: notation}
Here we collect and define some notation based on the discussion of this section which we will stick to throughout the rest of the paper.
\begin{itemize}
    \item $H_q$ is the intersection graph of a Hermitian unital $\scU_q$ over $PG(2,q^2)$ and its secant lines $\scL_q$, with vertex set $\{v_\l: \l \in \scL_q\}$ and edges $\{v_{\l_1}, v_{\l_2}\}$ for all $\l_1, \l_2 \in \scL_q$ intersecting in $\scU_q$.
    \item $\scC_q$ is the set of $q^3+1$ maximal cliques of order $q^2$ in $H_q$ which come from taking all secants passing through a given point.
    \item $\scT_q$ is the set of {\it non-degenerate} triangles in $H_q$, i.e., triangles which come from three lines in general position. More formally, $\scT_q$ collects all triangles on vertices $v_{\l_1}, v_{\l_2}, v_{\l_3} \in \scL_q$ which have $\l_1 \cap \l_2, \l_2 \cap \l_3, \l_1 \cap \l_3 \in \scU_q$ all distinct.
    See Figure~\ref{fig: non-degenerate triangles}.
    \item For each vertex $v \in V(H_q)$, we let $G_v$ denote the graph with vertex set $N(v)$ and edges which lie in spanning cliques in $N(v)$, i.e., those edges between pairs of vertices which form a non-degenerate triangle with $v$ in $H_q$. 
\end{itemize}


\section{Proof of Theorem~\ref{thm: main-1}}
\label{section: proof of Thm 1}

Keeping in line with the notation defined in Section~\ref{subsection: notation}, our distinguished family $\scT_q$ will be the set of non-degenerate triangles in $H_q$ coming from three secants in general position.
As noted in Proposition~\ref{prp: non-degenerate triangles} above, it is clear that no four non-degenerate triangles in $H_q$ can span a $K_4$, as otherwise their corresponding secants would form an O'Nan configuration in $(\scU_q, \scL_q)$.

The heart of the proof of Theorem~\ref{thm: main-1} lies in a modification of Goodman's formula~\cite{G59} 
(cf. \cite[Lemma 2]{Con})
for the number of monochromatic triangles in a two-coloring of $E(K_n)$.
The Goodman formula has been used often 
\cite{DR08-1, DR08-2, FR86, LRX14, S88}
in the study of $f(2,3,4)$.
We modify it to count the number of monochromatic triangles from a specific family $\scT$.

\begin{lemma}
\label{lem: goodman}
    Let $G = (V,E)$ be a graph, and let $\scT$ be a family of triangles in $G$.
    For a given coloring $\Delta: E \to \{R,B\}$ and a vertex $v \in V$, let $R_\Delta(v)$ ($B_\Delta(v)$) denote the number of triangles of $\scT$ containing $v$ in which both edges incident to $v$ are red (blue).
    Then the number of triangles in $\scT$ which are monochromatic under $\Delta$ is given by
    \begin{equation}
    \label{eq: Goodman-general}
        \frac{1}{2} \left( \sum_{v \in V} {\Big (} R_\Delta(v) + B_\Delta(v) {\Big )} - |\scT| \right).
    \end{equation}
\end{lemma}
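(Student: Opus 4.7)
The plan is to double-count, over all triangles $T\in\scT$ and all vertices $v\in T$, the number of ``monochromatic corners'' at $v$, i.e., the indicator that the two edges of $T$ incident to $v$ receive the same color under $\Delta$. Summed vertex-by-vertex, this quantity is exactly $\sum_{v\in V}(R_\Delta(v)+B_\Delta(v))$, since $R_\Delta(v)$ (resp.\ $B_\Delta(v)$) already counts the triangles of $\scT$ through $v$ whose two incident edges at $v$ are both red (resp.\ blue), and no triangle can be counted in both.

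Next, I would compute the same sum triangle-by-triangle. Any 2-coloring of the three edges of a triangle falls into exactly one of two cases. If the triangle $T$ is monochromatic, then at each of its three vertices the two incident edges share the same color, contributing $3$ to the double count. If $T$ is non-monochromatic, then two of its edges are one color and the third is the other; the unique vertex incident to both same-colored edges is the unique corner of $T$ where both incident edges agree in color, so $T$ contributes exactly $1$. Thus, writing $M$ for the number of monochromatic triangles in $\scT$ under $\Delta$, the two counts give
\begin{equation*}
    \sum_{v\in V}\bigl(R_\Delta(v)+B_\Delta(v)\bigr) \;=\; 3M + 1\cdot(|\scT|-M) \;=\; 2M+|\scT|.
\end{equation*}
Solving for $M$ yields the claimed identity \eqref{eq: Goodman-general}.

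There is essentially no obstacle here: the only point that needs a line of justification is the elementary observation that every 2-coloring of a triangle has either $3$ or $1$ monochromatic corners, which is a direct case check on the four color patterns (RRR, BBB, RRB/RBR/BRR, BBR/BRB/RBB). I would also briefly note that the lemma specializes to the classical Goodman formula when $G=K_n$ and $\scT$ is the set of all triangles, so the bookkeeping is exactly a ``triangle-family restricted'' version of the standard argument.
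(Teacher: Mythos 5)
Your proof is correct and is essentially the same double-counting argument the paper uses: both count monochromatic corners once vertex-by-vertex (giving $\sum_v (R_\Delta(v)+B_\Delta(v))$) and once triangle-by-triangle (giving $3M + (|\scT|-M)$), then solve for $M$.
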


\begin{proof}
    It will be convenient to use the notation $m(\scT)$ and $nm(\scT)$ to denote, respectively, the number of monochromatic triangles in $\scT$ and the number of non-monochromatic triangles in $\scT$ under $\Delta$.
    With this, we have
    \begin{equation*}
        \sum_{v \in V} R_\Delta(v) + B_\Delta(v)
        = 3m(\scT) + nm(\scT).
    \end{equation*}
    Since $m(\scT) + nm(\scT) = |\scT|$, it follows that
    \begin{equation*}
        2 m(\scT)
        = \sum_{v \in V} {\Big (}R_\Delta(v) + B_\Delta(v) {\Big )} - |\scT|,
    \end{equation*}
    which gives the promised formula.
\end{proof}

By~\eqref{eqn: non-degenerate triangles}, we immediately get a formula which counts monochromatic triangles in $\scT_q$ under edge-colorings of $H_q$.

\begin{corollary}
\label{cor: goodman}
    Let $\Delta: E(H_q) \to \{R,B\}$ be a two-coloring, and for a vertex $v \in V(H_q)$, let $R_\Delta(v)$ ($B_\Delta(v)$) denote, as above, the number of triangles in $\scT_q$ in which both edges incident to $v$ are red (blue).
    Then the number of triangles in $\scT_q$ which are monochromatic under $\Delta$ is given by
    \begin{equation}
    \label{eq: Goodman-Hq}
        \frac{1}{2} \left( \sum_{v \in V(H_q)} {\Big (} R_\Delta(v) + B_\Delta(v) {\Big )} - \frac{1}{6} (q^4-q^3+q^2)(q^3-q)(q+1)q \right).
    \end{equation}
\end{corollary}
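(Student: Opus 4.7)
The plan is to apply Lemma~\ref{lem: goodman} directly with $G = H_q$ and $\scT = \scT_q$. Since the lemma has no structural hypotheses on $G$ or on $\scT$ beyond being a graph and a family of triangles in it, there is nothing to verify; I would simply invoke formula~\eqref{eq: Goodman-general} in this setting, and the expression on the right-hand side of~\eqref{eq: Goodman-Hq} appears except with $|\scT_q|$ in place of the explicit polynomial.

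The second and only remaining step is to substitute the value of $|\scT_q|$ from equation~\eqref{eqn: non-degenerate triangles}, which was computed earlier in Section~\ref{section: random-block-constr} via a double-counting argument: each vertex $v_\l$ lies in $(q^3-q)\binom{q+1}{2}$ non-degenerate triangles, so summing over vertices and dividing by $3$ yields the stated product $\frac{1}{6}(q^4-q^3+q^2)(q^3-q)(q+1)q$. No estimates or additional combinatorial work enter.

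Consequently, this corollary presents essentially no obstacle; it is a convenience restatement of Lemma~\ref{lem: goodman} in the form that will be used in the proof of Theorem~\ref{thm: main-1}, where tight control over $R_\Delta(v) + B_\Delta(v)$ for each vertex $v$ via the structure of $G_v$ (a disjoint union of $q^3-q$ edge-disjoint spanning cliques of order $q+1$, per Lemma~\ref{lem: nbhd}) will furnish the required lower bound on the total count of monochromatic non-degenerate triangles.
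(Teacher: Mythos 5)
Your proposal is correct and matches the paper's argument exactly: the corollary is obtained by applying Lemma~\ref{lem: goodman} with $G = H_q$ and $\scT = \scT_q$, then substituting the count of $|\scT_q|$ from~\eqref{eqn: non-degenerate triangles}. Nothing further is needed.
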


\begin{proof}[Proof of Theorem~\ref{thm: main-1}]
Let $\Delta: E(H_q) \to \{R,B\}$ be a given two-coloring of the edges of $H_q$, let $v \in V(H_q)$ be a fixed vertex, and let $G_v$ be the graph on $N(v)$ with edges forming non-degenerate triangles with $v$ (see Section~\ref{subsection: notation}).
We show that there necessarily exists a monochromatic triangle in $\scT_q$, which, by Proposition~\ref{prp: non-degenerate triangles}, suffices to prove the theorem.

Define an auxiliary vertex-coloring $\chi: V(G_v) \to \{R,B\}$ where
\begin{equation*}
    \chi(w) = \Delta(\{v,w\}),
\end{equation*}
i.e., $\chi$ assigns to vertices in the neighborhood of $v$ the color of their edge with $v$ under $\Delta$.
By construction, the number of monochromatic edges under $\chi$ is equal to $R_\Delta(v) + B_\Delta(v)$.
Recall that, by Lemma~\ref{lem: nbhd}, $G_v$ is composed of $q^3-q$ edge-disjoint spanning cliques of order $q+1$.
For a fixed such clique, the number of monochromatic edges it contains is 
\begin{equation}
\label{eq: mono-edges}
    \binom{|\chi^{-1}(R_\Delta(v))|}{2} + \binom{|\chi^{-1}(B_\Delta(v))|}{2}.
\end{equation}
Since $|\chi^{-1}(R_\Delta(v))| + |\chi^{-1}(B_\Delta(v))| = q+1$, the expression \eqref{eq: mono-edges} is minimized when $|\chi^{-1}(R_\Delta(v))| = |\chi^{-1}(B_\Delta(v))| = (q+1)/2$ by the convexity of the function $\binom{x}{2}$.
Therefore, each clique contributes at least $2 \binom{\frac{q+1}{2}}{2}$ monochromatic edges under $\chi$, and since these cliques are edge-disjoint, the total number of monochromatic edges in $G_v$ (under $\chi$) is at least
\begin{equation*}
    (q^3-q) \cdot 2 \binom{\frac{q+1}{2}}{2} 
    = (q^3-q) \left( \frac{1}{4}(q+1)^2 - \frac{1}{2}(q+1) \right)
\end{equation*}
Since this gives a lower bound for $R_\Delta(v) + B_\Delta(v)$ (independent of $\Delta$), summing over all $v \in V(H_q)$ we have
\begin{equation*}
    \sum_{v \in V(H_q)} R_\Delta(v) + B_\Delta(v) \geq (q^4-q^3+q^2) (q^3-q) \left( \frac{1}{4}(q+1)^2 - \frac{1}{2}(q+1) \right).
\end{equation*}
Thus, \eqref{eq: Goodman-Hq} is 
positive for all $\Delta$ when
\begin{equation}
\label{eqn: q geq 4}
    \frac{1}{4}(q+1)^2 - \frac{1}{2}(q+1) > \frac{1}{6}(q+1)q;
\end{equation}
equality holds when $q=3$, and the strict inequality holds for all $q > 3$.

Finally, note that as $q \to \infty$, the above bound gives
\begin{eqnarray*}
    \frac{1}{2} \left( \sum_{v \in V(H_q)} {\Big (} R_\Delta(v) + B_\Delta(v) {\Big )} - \frac{1}{6} (q^4-q^3+q^2)(q^3-q)(q+1)q \right)
    &\geq& \frac{1}{2} \left( \frac{1}{4}q^9 - \frac{1}{6}q^9 - O(q^8) \right)\\
    &=& \frac{1}{24}q^9 - O(q^8)\\
    &=& \left( \frac{1}{4} - o(1) \right)|\scT_q|,
\end{eqnarray*}
as claimed.
\end{proof}



\section{Proof of Theorem~\ref{thm: main-2}}
\label{section: proof of Thm 2}



We begin by observing another version Goodman's formula, which can be obtained from Lemma~\ref{lem: goodman} by simply noting that for any graph $G$, $\sum_{v \in V(G)} \frac{1}{3}e(G[N(v)])$ is equal to the number of triangles in $G$.

\begin{proposition}
\label{prp: goodman}
    Let $G = (V,E)$ be a graph, and let $\Delta: E \to \{R,B\}$ be a two-coloring of its edges.
    For a vertex $v \in V$, let $R_{\Delta}(v)$ ($B_\Delta(v)$) denote the number of edges in $N(v)$ in which both endpoints form a red (blue) edge with $v$.
    Then the number of monochromatic triangles in $G$ under $\Delta$ is given by
    \begin{equation*}
        \frac{1}{2} \sum_{v \in V} {\Big (} R_\Delta(v) + B_\Delta(v) - \frac{1}{3}|E(G[N(v)])| {\Big )}.
    \end{equation*}
\end{proposition}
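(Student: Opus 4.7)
The plan is to derive the proposition as a direct specialization of Lemma~\ref{lem: goodman}, applied with $\scT$ taken to be the family of \emph{all} triangles in $G$. The author has essentially indicated this route in the sentence preceding the statement, so my task is only to check that the two formulations of $R_\Delta(v)$ and $B_\Delta(v)$ match, and then to rewrite $|\scT|$ in the form appearing in Proposition~\ref{prp: goodman}.

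First I would observe that the two definitions of $R_\Delta(v)$ agree when $\scT$ is all triangles. In Lemma~\ref{lem: goodman}, $R_\Delta(v)$ counts triangles of $\scT$ through $v$ whose two edges at $v$ are both red; equivalently, it counts ordered pairs $\{u,w\} \subseteq N(v)$ with $u \sim w$ in $G$ and $\Delta(\{v,u\}) = \Delta(\{v,w\}) = R$. This is exactly the description in Proposition~\ref{prp: goodman}: it is the number of edges in $G[N(v)]$ both of whose endpoints form red edges with $v$. The analogous statement holds for $B_\Delta(v)$.

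Next I would carry out the double-counting identity $|\scT| = \frac{1}{3}\sum_{v \in V} |E(G[N(v)])|$. This is standard: for each triangle $\{a,b,c\}$ in $G$, the edge $\{b,c\}$ lies in $G[N(a)]$, $\{a,c\}$ lies in $G[N(b)]$, and $\{a,b\}$ lies in $G[N(c)]$, so each triangle is counted three times by $\sum_v |E(G[N(v)])|$.

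Finally, I would substitute these two observations into formula~\eqref{eq: Goodman-general} from Lemma~\ref{lem: goodman}, obtaining
\begin{equation*}
    \frac{1}{2}\left(\sum_{v \in V} \bigl(R_\Delta(v) + B_\Delta(v)\bigr) - \frac{1}{3}\sum_{v \in V}|E(G[N(v)])|\right),
\end{equation*}
and distribute the inner sum over $v$ to recover the form stated in the proposition. Since everything here is a straightforward bookkeeping reduction to an already-proved lemma, there is no real obstacle; the only thing one must be careful about is confirming that the local quantities $R_\Delta(v), B_\Delta(v)$ genuinely coincide under the two definitions, which is why I would spell out that verification before proceeding.
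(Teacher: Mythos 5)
Your proposal is correct and follows exactly the route the paper itself indicates: specialize Lemma~\ref{lem: goodman} to the family of all triangles and substitute $|\scT| = \frac{1}{3}\sum_{v \in V}|E(G[N(v)])|$. The verification that the two descriptions of $R_\Delta(v)$ and $B_\Delta(v)$ coincide is a sensible thing to spell out, and the double-counting step is standard, so nothing is missing.
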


Recall from Section~\ref{section: random-block-constr} that to make $H_q$ $K_4$-free, it suffices to replace each maximal clique in $\scC_q$ by a triangle-free graph.
In view of Proposition~\ref{prp: goodman}, to turn $H_q$ into a Folkman graph, we want to do this in such a way that for all $v$, we still have that $R_\Delta(v) + B_\Delta(v) > \frac{1}{3}|E(G[N(v)])|$ (to ensure the Ramsey property).
It turns out that the usual choice for a triangle-free graph, namely, a complete bipartite graph, does not quite work here.
The reason is because this latter property is captured well by graphs which have small maximum cut\footnote{Recall that for a graph $G$, the maximum cut is the maximum number of edges in a bipartite subgraph of $G$, i.e., $\maxcut(G) = \max \{ |E(G[S,T])|: S \sqcup T = V(G) \}$.} relative to the number of edges.
Indeed, in the proof of Theorem~\ref{thm: main-1}, lower-bounding the number of monochromatic edges under the auxiliary coloring $\chi$ was just upper-bounding the size of the maximum cut of $G_v$.\footnote{While much of the previous work~\cite{DR08-1, DR08-2, FR86, LRX14, S88} on $f(2,3,4)$ has been phrased in the language of maximum cuts, we have elected to use auxiliary colorings, as it simplifies our proofs and directly suggests several natural approaches towards obtaining improved bounds for more general classes of Folkman numbers using the methods of this paper.}

To that end, throughout this section, let $F$ be a triangle-free graph with the following fixed parameters:
$|V(F)| = n$, $|E(F)| = m$, and $\maxcut(F) = \a m$ for some $\a < 2/3$.
For such a graph one could take, e.g., a small instance of Alon's optimally pseudorandom $K_3$-free graphs~\cite{A94, A96}---see below for a discussion of the numerics that this specific graph gives.
We remark that much work has been done on triangle-free graphs with small maximum cuts, and we refer the interested reader to the discussion in~\cite{A96}.

The following proposition follows immediately from considering the number of edges contained in each color class if we think of a two-coloring of $V(F)$ as a cut.

\begin{proposition}
    Under any coloring $\chi: V(F) \to \{R,B\}$, there are at least $(1 - \a)m$ monochromatic edges.
\end{proposition}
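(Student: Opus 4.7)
The plan is to interpret the two-coloring $\chi$ as a cut of $F$ and then invoke the definition of $\text{maxcut}(F)$ directly. Concretely, given $\chi: V(F) \to \{R, B\}$, I would set $S = \chi^{-1}(R)$ and $T = \chi^{-1}(B)$, so that $S \sqcup T = V(F)$. With this identification, the bichromatic edges of $F$ under $\chi$ are exactly the edges of the bipartite subgraph $F[S,T]$, i.e., the edges crossing the cut $(S,T)$.

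From here, one short inequality finishes the job: the number of bichromatic edges equals $|E(F[S,T])|$, which is at most $\text{maxcut}(F) = \alpha m$ by definition of the maximum cut. Since every edge of $F$ is either monochromatic or bichromatic under $\chi$, the number of monochromatic edges is
\begin{equation*}
    m - |E(F[S,T])| \geq m - \alpha m = (1-\alpha) m,
\end{equation*}
as claimed.

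There is really no obstacle here; the statement is essentially a restatement of the definition of maxcut in the guise of a vertex two-coloring, and triangle-freeness plays no role in this particular proposition (it will only be used later, when $F$ is used to replace cliques in $\scC_q$ in order to destroy $K_4$'s). The only thing worth being careful about is to phrase it symmetrically in the two color classes so that the reader sees the immediate connection to the auxiliary-coloring device used in the proof of Theorem~\ref{thm: main-1}, where lower bounds on monochromatic edges under the auxiliary coloring $\chi$ are precisely upper bounds on the size of a cut in the corresponding graph $G_v$.
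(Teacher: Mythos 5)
Your proof is correct and matches the paper's argument exactly: the paper also remarks that the proposition ``follows immediately from considering the number of edges contained in each color class if we think of a two-coloring of $V(F)$ as a cut,'' which is precisely your identification of bichromatic edges with cut edges and the bound $|E(F[S,T])| \leq \text{maxcut}(F) = \alpha m$.
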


Recall that a {\it $t$-blowup} of a graph $G$, denoted $G[t]$, is obtained by replacing each vertex of $G$ with an independent set of size $t$ and each edge of $G$ with a complete bipartite graph between the sets which replaced its endpoints; thus, $F[t]$ has $nt$ vertices and $mt^2$ edges.
The following lemma essentially states that the proportion of monochromatic edges under a vertex-coloring of $F$ is preserved under taking blowups.

\begin{lemma}
\label{lem: blowup-coloring}
    Let $F[t]$ be a $t$-blowup of $F$ with $nt$ vertices and $mt^2$ edges.
    Under any coloring $\chi: V(F[t]) \to \{R,B\}$ of its vertex set, there are at least $(1 - \a)mt^2$ monochromatic edges.
\end{lemma}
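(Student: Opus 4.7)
The plan is a simple averaging argument that pushes the hypothesis on $F$ up to its blowup. For each $v \in V(F)$, let $I_v \subseteq V(F[t])$ denote the independent set of size $t$ that replaces $v$, and define $p_v := |\chi^{-1}(R) \cap I_v|/t$, so that the blue density on $I_v$ is $1 - p_v$. Since each edge $uv \in E(F)$ corresponds in the blowup to a complete bipartite graph between $I_u$ and $I_v$, I would first write the number of monochromatic edges in $F[t]$ under $\chi$ as
\begin{equation*}
    \sum_{uv \in E(F)} \bigl( p_u p_v t^2 + (1-p_u)(1-p_v) t^2 \bigr) = t^2 \sum_{uv \in E(F)} \bigl( p_u p_v + (1-p_u)(1-p_v) \bigr).
\end{equation*}

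Next, I would interpret each summand probabilistically. Construct a random two-coloring $\chi' : V(F) \to \{R,B\}$ by selecting, independently for each $v \in V(F)$, a uniformly random $\sigma(v) \in I_v$ and setting $\chi'(v) := \chi(\sigma(v))$. For an edge $uv \in E(F)$, the selections $\sigma(u)$ and $\sigma(v)$ are independent (as $I_u \cap I_v = \emptyset$), so $\Pr[\chi'(u) = \chi'(v)] = p_u p_v + (1-p_u)(1-p_v)$. Consequently, the expected number of monochromatic edges of $F$ under $\chi'$ is exactly $\sum_{uv \in E(F)} \bigl(p_u p_v + (1-p_u)(1-p_v)\bigr)$.

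Finally, I would invoke the hypothesis $\text{maxcut}(F) = \alpha m$ pointwise: every deterministic two-coloring of $V(F)$ leaves at least $(1-\alpha)m$ edges monochromatic, and hence so does $\chi'$ in expectation. Substituting this lower bound back into the display for $F[t]$ yields at least $(1-\alpha)mt^2$ monochromatic edges, as claimed. The argument is essentially mechanical; the only conceptual step is recognizing the blowup edge count as the expectation of a random lift, after which no real obstacle remains.
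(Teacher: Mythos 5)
Your argument is correct, but it reaches the bound by a genuinely different mechanism than the paper. The paper parametrizes the coloring by the integers $r_i = |\chi^{-1}(R) \cap V_i|$, observes that the resulting count of monochromatic edges is a multilinear polynomial in $(r_1,\dots,r_n) \in [0,t]^n$, and so is minimized at a vertex of the hypercube $\{0,t\}^n$; such a vertex corresponds to a coloring constant on each blowup class, which collapses to a two-coloring of $V(F)$ where the hypothesis applies directly. You instead interpret the same quantity $\sum_{uv \in E(F)} \bigl(p_u p_v + (1-p_u)(1-p_v)\bigr)$ as the expected number of monochromatic edges of $F$ under a random ``lift'' coloring $\chi'$, then apply the pointwise bound to every realization and conclude by linearity of expectation. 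The two proofs are morally the same reduction -- every coloring of $F[t]$ is a convex combination (in the relevant sense) of colorings of $F$ -- but your probabilistic packaging avoids any appeal to boundary minimization of multilinear functions and is, if anything, a bit cleaner; the paper's version makes the extremal structure explicit, which can be useful if one later wants to characterize equality cases.
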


\begin{proof}
    Let $V = V_1 \cup \dots \cup V_n$ be the vertex set of $F[t]$, where $V_i$ corresponds to the blowup of the vertex $i \in V(F)$, and let $\chi: V \to \{R,B\}$ be a coloring of the vertex set.
    For convenience, set
    \begin{equation*}
        r_i = |\{v \in V_i: \chi(v) = R\}| \text{ and } b_i = |\{v \in V_i: \chi(v)=B\}|
    \end{equation*}
    for each $i \in [n]$, noting that $r_i + b_i = t.$
    Then the number of of monochromatic edges under $\chi$ is given by
    \begin{equation*}
        \sum_{\{i,j\} \in E(F)} r_ir_j + b_ib_j 
        = \sum_{\{i,j\} \in E(F)} r_ir_j + (t-r_i)(t-r_j) 
        = \sum_{\{i,j\} \in E(F)} t^2 - (r_i+r_j)t +2r_ir_j,
    \end{equation*}
    so we are looking for the minimum of the polynomial 
    \begin{equation*}
        f(r_1, \dots, r_n) =\sum_{\{i,j\} \in E(F)} t^2 - (r_i+r_j)t +2r_ir_j 
        \hspace{1cm} \text{ over } \hspace{1cm}
        (r_1, \dots, r_n) \in [0,t]^n.
    \end{equation*}
    Note that $f$ is multilinear, i.e., $\frac{\partial^2}{\partial r_i^2} f = 0$ for each $i \in [n]$, so its minimum is only achieved on the boundary of $[0,t]^n$, and in fact on $\{0,t\}^n$.
    This means that the minimum number of monochromatic edges in a vertex-coloring of $F[t]$ will (only) be achieved by a coloring $\chi$ which colors each $V_i$ monochromatically.
    But such a $\chi$ will reduce to a coloring $\chi'$ of $V(F)$, and we know at least $(1 - \a)m$ edges of $F$ must be monochromatic under $\chi'$;
    therefore, at least $(1 - \a)mt^2$ edges of $F[t]$ must be monochromatic under $\chi$.
\end{proof}

Our plan now is to replace each maximal clique by a random blowup of $F$, which is necessarily triangle-free just because $F$ is.
More precisely, for each $w \in C \in \scC_q$, define a random variable $X_{w,C} \in [n]$ which assigns $(w,C)$ to one of the $n$ vertices of $F$ uniformly at random and independently of all the other $X_{w',C'}$'s.
After $X_{w,C}$ is chosen for each $w \in C$, we will then take edges between all vertices $u$ and $w$ such that $u$ and $v$ both lie in some $C \in \scC_q$ and $X_{u,C}$ and $X_{w,C}$ are an edge in $F$.

For each $v \in V(H_q)$, recall that $G_v$ is the graph on $N(v)$ which is the union of the $q^3-q$ spanning cliques in $N(v)$.
Hereafter, we will denote by $H_q^*$ the subgraph of $H_q$ obtained after replacing each $C \in \scC_q$ with a random blowup of $F$; by $N^*(v)$ the neighborhood of $v$ in $H_q^*$; and by $G_v^*$ the graph $H_q^*$ induced on $N^*(v)$ (note that $E(G_v^*) = E(G_v) \cap \binom{N^*(v)}{2}$).

With this, we prove the following lemma.

\begin{lemma}
\label{lem: blowup}
    For every $\d > 0$, the following event holds with high probability, i.e., with probability approaching 1 as $q \to \infty$.
    For every $v \in V(H_q)$ and every spanning clique $C$ in $G_v$,
    each of the $n$ blown-up vertices in $C$ will have\footnote{The notation $(1 \pm \d) \frac{2m(q+1)}{n^3}$ means that the number lies between $(1 - \d) \frac{2m(q+1)}{n^3}$ and $(1 + \d) \frac{2m(q+1)}{n^3}$.} $(1 \pm \d) \frac{2m(q+1)}{n^3}$ vertices in $N^*(v)$.
\end{lemma}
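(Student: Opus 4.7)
The plan is to prove concentration for each triple consisting of a vertex $v \in V(H_q)$, a spanning clique $C$ of $G_v$, and a blown-up position $j \in [n]$, and then to conclude via a union bound over the polynomially many such triples. Fix such a triple and enumerate $C = \{w_1, \dots, w_{q+1}\}$; by Lemma~\ref{lem: nbhd}, there is a unique $C' \in \scC_q$ with $C \subseteq C'$ and $v \notin C'$. For each $i$, let $C_i \in \scC_q$ be the unique clique containing both $v$ and $w_i$: geometrically, $C_i$ corresponds to the point $p_i \in \scU_q$ on $v$'s secant through which $w_i$'s secant passes. The $p_i$ are distinct, so the $C_i$ are pairwise distinct, and in particular $C_i \neq C'$ for every $i$. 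The quantity we wish to control is
\[
    Y_{v, C, j} := \sum_{i=1}^{q+1} Z_i, \quad \text{where } Z_i := \mathbbm{1}[X_{w_i, C'} = j] \cdot \mathbbm{1}[\{X_{v, C_i}, X_{w_i, C_i}\} \in E(F)].
\]

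The key observation is that the $Z_i$ are mutually independent Bernoulli variables. Indeed, $Z_i$ depends only on the three random variables $X_{w_i, C'}$, $X_{v, C_i}$, and $X_{w_i, C_i}$, which involve vertex $w_i$ in clique $C'$ and the vertices $v, w_i$ in the distinct clique $C_i$. For distinct indices $i \neq i'$, either the $H_q$-vertex differs (the $w_i$ are distinct) or the $\scC_q$-clique differs (the $C_i$ are pairwise distinct and each is different from $C'$), so the sets of $X$-variables on which $Z_i$ and $Z_{i'}$ depend are disjoint. A direct computation then yields $\mathbb{E}[Z_i] = \tfrac{1}{n} \cdot \tfrac{2m}{n^2} = \tfrac{2m}{n^3}$, whence $\mathbb{E}[Y_{v, C, j}] = \tfrac{2m(q+1)}{n^3}$, and a multiplicative Chernoff bound gives
\[
    \Pr\bigl[ |Y_{v, C, j} - \mathbb{E}[Y_{v, C, j}]| > \d\, \mathbb{E}[Y_{v, C, j}] \bigr] \leq 2 \exp\!\Bigl( -\tfrac{\d^2}{3} \cdot \tfrac{2m(q+1)}{n^3} \Bigr).
\]

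To conclude, take a union bound over the at most $(q^4-q^3+q^2)(q^3-q) \cdot n = O(q^7 n)$ triples $(v, C, j)$. Treating the parameters $n, m$ of $F$ and $\d$ as fixed, the total failure probability is bounded by $O\!\bigl(q^7 n \exp(-c(\d) q)\bigr)$, which tends to zero as $q \to \infty$. The only delicate step in the argument is the verification of mutual independence of the $Z_i$'s; this is not quite immediate from the definitions alone, but follows from the geometric observation that each neighbor of $v$ in a spanning clique reaches $v$ through a distinct maximal clique of $\scC_q$ (parametrised by the $q+1$ distinct points of $\scU_q$ on $v$'s secant), itself distinct from the ambient clique $C'$. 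Once this is in hand, the remainder is a routine Chernoff-plus-union-bound estimate.
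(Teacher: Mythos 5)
Your proof is correct, and it takes a genuinely different---and in fact slightly sharper---route than the paper's. The paper fixes the same triple $(v, C, j)$ and treats the count as a function $f$ of the $3(q+1)$ relevant $X$-variables, then invokes McDiarmid's bounded-differences inequality, using only that flipping any single $X_{\cdot,\cdot}$ changes $f$ by at most $1$. You go further and observe that these $3(q+1)$ variables partition into the $q+1$ disjoint triples $\{X_{w_i,C'}, X_{v,C_i}, X_{w_i,C_i}\}$, one per vertex $w_i$ of the spanning clique, and that the event ``$w_i$ lands in position $j$ and survives as a neighbor of $v$'' depends only on the $i$-th triple. The disjointness follows from exactly the geometric facts you cite: the $w_i$ are distinct, the cliques $C_i$ are pairwise distinct (since they correspond to the $q+1$ distinct points of $\scU_q$ on $v$'s secant), and each $C_i \neq C'$ (since $v \in C_i$ but $v \notin C'$). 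This yields genuine mutual independence of the indicators $Z_i$, so a direct multiplicative Chernoff bound applies, giving exponent $-\Theta(\d^2 \cdot \frac{2m(q+1)}{n^3})$, which beats the paper's McDiarmid exponent $-\frac{8\d^2 m^2}{3n^6}(q+1)$ by a factor of order $n^3/m > 1$. Both decay exponentially in $q$ with $n$, $m$, $\d$ fixed, so either suffices for the union bound over the $O(nq^7)$ triples; your version buys a tighter constant and dispenses with McDiarmid altogether, whereas the bounded-differences argument would be the more robust tool if the indicators were only weakly dependent rather than fully independent.
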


The proof uses McDiarmid's bounded differences inequality (see, e.g.,~\cite{M}).


\begin{theorem}[{\cite[Theorem 3.1]{M}}]
\label{thm: mcdiarmid}
    Let $\mathbf{X} = (X_1, \dots, X_k)$ be a family of independent random variables with $X_i$ taking values in a set $A_i$ for each $i$.
    Suppose that a real-valued function $f$ defined on $\prod_{i=1}^k A_i$ satisfies
    \begin{equation*}
        |f(\mathbf{x}) - f(\mathbf{x}')| \leq c_i
    \end{equation*}
    whenever the vectors $\mathbf{x}$ and $\mathbf{x}'$ differ only in the $i$th coordinate.
    Then for any $\d > 0$,
    \begin{equation*}
        \bbP{\Big [} {\big |}f(\mathbf{X}) - \bbE[f(\mathbf{X})] {\big |} \geq \d \bbE[f(\mathbf{X})] {\Big ]}
        \leq 2\exp \left( - \frac{2 \d^2 \bbE [f(\mathbf{X})]^2}{c_1^2 + \dots + c_k^2}. \right)
    \end{equation*}
\end{theorem}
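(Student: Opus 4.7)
The plan is to prove McDiarmid's inequality by reducing it to a Chernoff-type bound on the Doob martingale of $f(\mathbf{X})$. First, I would set $\mathcal{F}_0 = \{\emptyset, \Omega\}$ and $\mathcal{F}_i = \sigma(X_1, \dots, X_i)$ for $1 \leq i \leq k$, and define the martingale $Y_i := \bbE[f(\mathbf{X}) \mid \mathcal{F}_i]$. By the tower property this is indeed a martingale with respect to $(\mathcal{F}_i)$, with $Y_0 = \bbE[f(\mathbf{X})]$ and $Y_k = f(\mathbf{X})$, so that the centered quantity $f(\mathbf{X}) - \bbE[f(\mathbf{X})]$ telescopes as $\sum_{i=1}^{k} Z_i$, where $Z_i := Y_i - Y_{i-1}$.

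The central step is the conditional range bound on each increment $Z_i$. By independence of the $X_j$'s, $Y_i$ admits the explicit form $Y_i = g_i(X_1,\dots,X_i)$, where
\begin{equation*}
    g_i(x_1, \dots, x_i) := \bbE[f(x_1, \dots, x_i, X_{i+1}, \dots, X_k)],
\end{equation*}
and similarly $Y_{i-1}$ equals $\bbE_{X_i'}[g_i(X_1, \dots, X_{i-1}, X_i')]$. Hence, conditional on $\mathcal{F}_{i-1}$, the increment $Z_i$ is a centered function of $X_i$ alone, and the bounded-differences hypothesis on $f$ forces $|g_i(X_1, \dots, X_{i-1}, x) - g_i(X_1, \dots, X_{i-1}, x')| \leq c_i$ for any $x, x' \in A_i$. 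Therefore $Z_i$ has conditional range at most $c_i$ and conditional mean zero.

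Next I would invoke Hoeffding's lemma: any centered random variable with range of length at most $c$ has moment generating function bounded by $\exp(\lambda^2 c^2 / 8)$. Applied conditionally, this yields $\bbE[e^{\lambda Z_i} \mid \mathcal{F}_{i-1}] \leq \exp(\lambda^2 c_i^2/8)$ almost surely. Iterating via the tower property produces $\bbE[e^{\lambda(Y_k - Y_0)}] \leq \exp(\lambda^2 (c_1^2 + \dots + c_k^2)/8)$, and Markov's inequality applied to $e^{\lambda(Y_k - Y_0)}$ gives $\bbP[Y_k - Y_0 \geq t] \leq \exp(-\lambda t + \lambda^2 \sum_i c_i^2/8)$ for every $\lambda > 0$. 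Optimizing at $\lambda = 4t / \sum_i c_i^2$ yields the one-sided tail bound $\exp(-2t^2/\sum_i c_i^2)$; the symmetric argument applied to $-f$, together with a union bound, produces the factor of $2$ in the stated inequality. Finally, taking $t = \delta \bbE[f(\mathbf{X})]$ gives the promised form.

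The only genuinely delicate point in this program is establishing that $Z_i$ has conditional range at most $c_i$: it requires the bounded-differences property to survive the partial integration defining $Y_i$, which in turn relies essentially on the independence of the coordinates $X_{i+1}, \dots, X_k$ from $\mathcal{F}_i$ (without independence, the conditional law of the tail could shift as we change $X_i$, violating the bound). Hoeffding's lemma itself is a short convexity argument, and the final Chernoff-style optimization is routine once the conditional MGF bound is in place.
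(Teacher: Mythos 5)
The paper does not prove this statement---it is imported verbatim by citation from McDiarmid's survey---so there is no internal proof to compare against; your argument is the standard Doob-martingale decomposition combined with the conditional Hoeffding lemma and a Chernoff optimization, which is exactly the proof given in the cited source. Your reasoning is correct throughout, including the one genuinely delicate point you flag (that independence of $X_{i+1},\dots,X_k$ from $\mathcal{F}_i$ is what lets the bounded-differences property survive the partial averaging defining $Y_i$, so each increment has conditional range at most $c_i$), and the optimization $\lambda = 4t/\sum_i c_i^2$ correctly yields the exponent $-2t^2/\sum_i c_i^2$.
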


\begin{proof}[Proof of Lemma~\ref{lem: blowup}]
    Fix $v \in V(H_q)$, $C_0$ a spanning clique in $G_v$, and $i \in [n]$, and let $w_1, \dots, w_{q+1}$ be the vertices of $C_0$.
    Let $V_i(C_0)$ be the blowup of vertex $i$ in $C_0$, let $C_1, \dots C_{q+1} \in \scC_q$ be the (distinct) maximal cliques with $C_j$ containing the edge $\{v,w_j\}$, and let $f$ be the function depending on the $3(q+1)$ variables
    \begin{equation*}
        \{X_{v,C_j}: j \in [q+1]\} \cup \{X_{w_j,C_j}: j \in [q+1]\} \cup \{X_{w_j, C_0}: j \in [q+1]\}
    \end{equation*}
    which computes the number of vertices of $C_0$ which end up in $V_i(C_0) \cap N^*(v)$.
    Note that a vertex $w_j \in C_0$ appears in $V_i(C_0) \cap N^*(v)$ if and only if $w_j$ survives as a neighbor of $v$ and $X_{w_j,C_0} = i$.
    Thus, if we let
    \begin{equation*}
        \mathbf{X} = (X_{v,C_1}, \dots, X_{v,C_{q+1}}, X_{w_1,C_1}. \dots, X_{w_{q+1},C_{q+1}}, X_{w_1, C_0}, \dots, X_{w_{q+1},C_0}),
    \end{equation*}
    then $\bbE[f(\mathbf{X})] = \frac{2m(q+1)}{n^3}$, as for each vertex $w_j \in C_0$, the edge $\{v,w_j\}$ survives with probability $2m/n^2$ and $X_{w_j,C_0} = i$ with probability $1/n$, and these events are independent (as the first depends only on $X_{v, C_j}$ and $X_{w_j,C_j}$).
    Also, changing the value of any one of these $3(q+1)$ variables can change the value of $f$ by at most 1, as in the most extreme case,
    since the $C_j$'s and $C_0$ are all distinct cliques,
    it can simply either add or remove a vertex from $V_i(C) \cap N^*(v)$; hence we can take all of the bounded difference constants $c_i$ to be 1.
    Therefore, by Theorem~\ref{thm: mcdiarmid},
    \begin{equation*}
        \bbP {\Big [} {\big |} f(\mathbf{X}) - \bbE[f(\mathbf{X})] {\big |} \geq \d \bbE[f(\mathbf{X})] {\Big ]} 
        \leq 2\exp \left( - \frac{2 \cdot \d^2 \cdot \frac{4m^2}{n^6} (q+1)^2}{3(q+1)} \right) 
        = 2 \exp \left( - \frac{8 \d^2 m^2}{3n^6} (q+1) \right).
    \end{equation*}
    Taking a union bound over the $(q^4-q^3+q^2)(q^3-q) \cdot n \leq nq^7$ combinations of $v$, $C_0$ and $i \in [n]$ then shows that with probability at least
    \begin{equation}
    \label{eqn: probability}
        1 - 2 \exp \left( - \frac{8 \d^2 m^2}{3n^6} \cdot q + 7\ln(nq) \right),
    \end{equation}
    $|V_i(C) \cap N^*(v)| = (1\pm\d)\frac{2m(q+1)}{n^3}$ holds for all $v \in V(H_q^*)$, spanning cliques $C$ in $G_v$, and $i \in [n]$.
\end{proof}

The proof of Theorem~\ref{thm: main-2} is now straightforward.

\begin{proof}[Proof of Theorem~\ref{thm: main-2}]
Fix $\d > 0$ sufficiently small, fix an instance $H_q^*$ in which Lemma~\ref{lem: blowup} holds, and fix a vertex $v \in V(H_q^*)$.
Note that since $F$ is triangle-free, any blowup of $F$ is also triangle-free, and thus as explained above, $H_q^*$ is $K_4$-free.
Hence, we need to check that the Ramsey property still holds in $H_q^*$, for which we use Proposition~\ref{prp: goodman}.

To that end, fix 
a vertex $v \in V(H_q^*)$, and recall that $H_q^*[N^*(v)] = G_v^*$.
For each clique $C$ in $G_v$, in $G_v^*$ there now exists $m \cdot \left((1\pm\d)\frac{2m(q+1)}{n^3}\right)^2$ edges, and so in total, the number of edges in $G_v^*$ is at most $m \cdot (q^3-q) \cdot \left((1+\d)\frac{2m(q+1)}{n^3}\right)^2$.
However, by Lemma~\ref{lem: blowup-coloring}, it follows that for any choice of an edge-coloring $\Delta$ of $H_q^*$, each $C$ will contribute at least 
$(1 - \a) m \cdot \left((1-\d)\frac{2m(q+1)}{n^3}\right)^2$
to the sum $\sum_{v \in V(H_q^*)} R_\Delta(v) + B_\Delta(v)$, 
and so
we will have
\begin{equation*}
    \sum_{v \in V(H_q^*)} R_\Delta(v) + B_\Delta(v) \geq |V(H_q^*)|(1-\a) m \cdot (q^3-q) \cdot \left((1-\d)\frac{2m(q+1)}{n^3}\right)^2.
\end{equation*}
Therefore, by Proposition~\ref{prp: goodman} the number of monochromatic triangles in $H_q^*$ under any coloring $\Delta$ is at least
\begin{equation*}
    \frac{1}{2}|V(H_q^*)| \left( \left(1-\a\right) m \cdot (q^3-q) \cdot \left((1-\d)\frac{2m(q+1)}{n^3}\right)^2 - \frac{1}{3} m \cdot (q^3-q) \cdot \left((1+\d)\frac{2m(q+1)}{n^3}\right)^2 \right),
\end{equation*}
which is positive for $\d$ small enough by the hypothesis that $\a < 2/3$.
\end{proof}

\begin{remark*}
    In fact, we can say a bit more. 
    Note that the expected number of triangles in $H_q^*$ is $\frac{8m^3}{n^6} |\scT_q| = \Theta(q^9)$, as each edge of a triangle in $\scT_q$ survives the random block construction independently of the other two.
    Therefore, if we take an instance of $H_q^*$ where this also holds, the last equation shows that a positive proportion of the triangles in $H_q^*$ are monochromatic under any edge-coloring.
    If $\a = 1/2+o(1)$ as in Alon's triangle-free graphs or sparse random graphs, then we in fact get a $1/4-o(1)$ portion of the remaining triangles monochromatic.
\end{remark*}

\subsection{The quantitative aspects of Theorem~\ref{thm: main-2}}
\label{section: quantitative-aspects}

We proved Theorem~\ref{thm: main-2} by replacing each maximal clique in $\scC_q$ with a random blowup of a fixed graph $F$, and the proof goes through for any choice of $F$ which is both triangle-free and has a maximum cut less than $\frac{2}{3}|E(F)|$.
The probability~\eqref{eqn: probability} which we require to be positive depends on the two fixed parameters $m = |E(F)|$ and $n = |V(F)|$, a parameter $\d$ which need only be small enough for the last equation to go through (and thus may be optimized based on $m$ and $n$), and the prime power $q$ which we simply take to be sufficiently large in terms of the others.

Our choice for $F$ is based on the following theorem due to Alon~\cite{A94, A96}.
This particular statement follows from Lemma 3.1 and Proposition 3.2 in~\cite{A96}, using his construction from~\cite{A94}.

\begin{theorem}[Alon~\cite{A94, A96}]
\label{thm: Alon}
    For every $k \not\equiv 0$ (mod $3$), there exists a triangle-free graph $G_k$ on $2^{3k}$ vertices with
    \begin{equation}
    \label{eqn: alon-maxcut}
        \maxcut(G_k) \leq \frac{1}{4}2^{3k} \left( 2^{k-1}(2^{k-1}-1) + 9 \cdot 2^k + 3 \cdot 2^{k/2} + \frac{1}{4} \right)
    \end{equation}
    and
    \begin{equation}
    \label{eqn: alon-edges}
        e(G_k) = \frac{1}{2} 2^{3k} 2^{k-1}(2^{k-1}-1).
    \end{equation}
\end{theorem}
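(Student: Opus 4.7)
The plan is to follow Alon's two-part strategy: construct an explicit Cayley graph with the desired combinatorial structure, then bound its maximum cut spectrally.

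First I would take $G_k$ to be the Cayley graph on the additive group of $\F_{2^{3k}}$ with a symmetric connection set $S$ obtained as the image of a carefully designed algebraic map from $\F_{2^k}^*$ (the construction of \cite{A94}). Triangle-freeness reduces to verifying algebraically that no three distinct elements of $S$ sum to $0$ in $\F_{2^{3k}}$, which is a direct polynomial identity check. The size $|S|$ determines the regular degree, and a routine count then yields $e(G_k) = \tfrac{1}{2}\cdot 2^{3k}\cdot 2^{k-1}(2^{k-1}-1)$, i.e.\ \eqref{eqn: alon-edges}.

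The bulk of the work lies in establishing \eqref{eqn: alon-maxcut}. I would invoke the standard spectral bound
\begin{equation*}
\mathrm{maxcut}(G) \;\leq\; \frac{e(G)}{2} + \frac{n\,|\lambda_{\min}(G)|}{4},
\end{equation*}
valid for any $d$-regular graph $G$ on $n$ vertices (proved by plugging the $\pm 1$-indicator of the optimal cut into the Rayleigh quotient of the signless Laplacian). Since $G_k$ is a Cayley graph on an abelian group, its eigenvalues are the character sums $\widehat{\mathbf{1}_S}(\chi) = \sum_{s\in S}\chi(s)$ as $\chi$ ranges over characters of $\F_{2^{3k}}$. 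Bounding these via Weil's estimate for exponential sums along a curve yields $|\lambda_{\min}(G_k)| = O(2^k)$---square-root cancellation in the degree---and substituting into the spectral inequality recovers the precise form of \eqref{eqn: alon-maxcut}.

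The main obstacle is this eigenvalue estimate. Getting the correct order of magnitude, $O(2^k)$, is routine once one has set up the Cayley structure, but producing the sharp lower-order terms $9\cdot 2^k + 3\cdot 2^{k/2} + \tfrac{1}{4}$ demands honest Weil-type bounds together with careful case analysis for characters that interact nontrivially with the algebraic structure of $S$. This is precisely where the hypothesis $k \not\equiv 0 \pmod{3}$ enters: it rules out certain degenerations of the associated curve over $\F_{2^k}$ that would otherwise invalidate the square-root bound and leave a residual term growing faster than $2^k$.
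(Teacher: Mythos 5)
The statement in question is an \emph{external citation}: the paper attributes it to Alon~\cite{A94, A96} and gives no proof, merely remarking that it follows from Lemma~3.1 and Proposition~3.2 of~\cite{A96} together with the construction of~\cite{A94}. So there is no ``paper's own proof'' for direct comparison; I can only assess whether your sketch plausibly reconstructs Alon's argument.

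Your high-level framework is the right one: $G_k$ is a Cayley graph on $\F_{2^k}^3 \cong \F_2^{3k}$; triangle-freeness is an algebraic non-solvability statement about the connection set; the max cut bound comes from the spectral inequality $\mathrm{maxcut}(G)\le \frac{n}{4}(d-\lambda_{\min})$ for $d$-regular $G$; and the last eigenvalue is controlled by a Weil-type character sum estimate. Indeed, $9\cdot 2^k + 3\cdot 2^{k/2} + \frac14 = (3\cdot 2^{k/2}+\frac12)^2$, which is exactly the shape of Alon's eigenvalue bound plugged into that inequality, so the arithmetic of~\eqref{eqn: alon-maxcut} does correspond to the spectral route you describe.

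However, there is a concrete error in the description of the connection set. If $S$ were the image of a map from $\F_{2^k}^*$, then $|S| \le 2^k-1 \approx n^{1/3}$, making $G_k$ roughly $n^{1/3}$-regular. But from~\eqref{eqn: alon-edges} the degree is $d = 2^{k-1}(2^{k-1}-1) \approx n^{2/3}/4$, so $|S| \approx 2^{2k-2}$ --- about a factor of $2^k$ larger than a one-parameter image. In Alon's construction $S$ is a two-parameter family: essentially the full set of nonzero $\F_{2^k}$-scalar multiples of a curve $\{(a,a^3,a^5): a\in\F_{2^k}\}$ inside $\F_{2^k}^3$. This affects the character-sum step nontrivially, since the relevant eigenvalue is a double sum (over the curve parameter and the scalar) rather than a single Weil sum along a curve. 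Relatedly, the attribution of the hypothesis $k\not\equiv 0\ (\mathrm{mod}\ 3)$ to ``degenerations of the associated curve that invalidate the square-root bound'' is a guess rather than the actual reason; the condition is a coprimality constraint on $\gcd$'s of small exponents with $2^k-1$ that is needed so that the power maps in the construction are permutations and the triangle-freeness (no Schur triple in $S$) check goes through, not a hypothesis of Weil's theorem.
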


The smallest $k \not\equiv 0$ (mod 3) for which $\maxcut(G_k) < \frac{2}{3}e(G_k)$ is $k=7$, and thus we may take for $F$
\begin{equation*}
    n = |V(F)| = 2^{21} \hspace{2cm} \text{and} \hspace{2cm} m = |E(F)| = 2^{26}(2^6-1).
\end{equation*}
With this choice of $F$, the smallest $q$ satisfying Theorem~\ref{thm: main-2} will be roughly $2^{70}$, and so the graph $H_q^*$ gives an upper bound of roughly $f(2,3,4) \leq 2^{280}$ (as $|V(H_q^*)| = q^4-q^3+q^2$).
This is, however, substantially larger than even the Folkman graphs constructed by Frankl and R\"odl~\cite{FR86} and by Spencer~\cite{S88}.


\section{Concluding remarks}
\label{section: conclusion}

In this paper, we showed that for all prime powers $q \geq 4$, the graph $H_q$ has a system $\scT_q \subseteq \binom{H_q}{K_3}$ with
\begin{equation*}
    \scT_q \nsupseteq K_4 \hspace{2cm} \text{and} \hspace{2cm} H_q \longrightarrow (K_3)_{\scT_q},
\end{equation*}
and that for large $q$, one can destroy all $K_4$'s in $H_q$ while preserving the Ramsey property $H_q \longrightarrow K_3$.
We believe it is possible that the requirement that $q$ is large to be purely a defect of our method as opposed to a fundamental property of the graphs $H_q$.
On the other hand, as detailed in Appendix~\ref{section: appendix} below, we ran a number of computer-aided experiments on $H_3$ to test whether it contained any genuine Folkman graph as a subgraph, none of which succeeded.
With this in mind, we pose the following problem.
\begin{problem}
\label{problem: true-folkman}
    Decide whether the graph $H_3$ has a $K_4$-free subgraph $H$ with $H \longrightarrow K_3$.
\end{problem}
\noindent
Since $H_3$ has 63 vertices, a solution to this problem in the affirmative would resolve Graham's \$100 problem.

It is also worth mentioning that since all previous work on effective bounds for $f(2,3,4)$ have used some version of Goodman's formula or maximum cuts, which are both highly specific to two-colorings, none have admitted an extension to colorings using three or more colors.
Thus, it would be interesting to determine whether the graphs $H_q$ exhibit similar ``quasi-Folkman" properties for more than two colors.

\begin{problem}
\label{pr: multicolor}
    Show that for $r \geq 3$ and $q = q(r)$ sufficiently large, every $r$-coloring of $E(H_q)$ contains a monochromatic $K_3$ in $\scT_q$.
\end{problem}

\subsection{Acknowledgments}
The first author would like to thank Cosmin Pohoata for his encouragement and for many helpful suggestions and conversations.

\subsection{Declaration on the use of AI}
The authors declare that no AI was used in the preparation of this manuscript, and that all mathematical ideas and written components are human-generated.


\appendix
\section{Ramsey properties of $H_3$}
\label{section: appendix}

Here we present our computational results on $H_3$ and prove the $q=3$ case of Theorem~\ref{thm: main-1}.
Determining if a graph $G$ equipped with a given system of triangles $\scT  \subseteq \binom{G}{K_3}$ has the ``quasi-Folkman property" studied in this paper, i.e., if $\scT\nsupseteq K_4$ and $G \longrightarrow (K_3)_\scT$, can be done practically by transforming the property to a \textit{satisfiability} (SAT) problem.
For two-colorings, we assign one boolean variable $x_e$ to each edge $e\in E$, indicating the `color' the edge receives.
Then, for each triangle $\{v,w,z\}\in\scT$, we make two clauses: \( x_{\{v,w\}}\lor x_{\{v,z\}} \lor x_{\{w,z\}} \) and \( \overline{x_{\{v,w\}}}\lor \overline{x_{\{v,z\}}} \lor \overline{x_{\{w,z\}}} \). These indicate that not all three edges can be of the same color.
Then the task is to find an assignment of truth values to every $x_e$ such that all clauses are `true'. This is an instance of a 3-SAT problem, for which fast solvers exist.
We used two different SAT-solvers, for comparison and to reduce the risk of programming errors: \glucose~ and \kissat.

The SAT-instance corresponding to the quasi-Folkman property of $H_3$ has 1008 variables and 6048 clauses: \kissat~ needs 2 seconds to verify that the instance cannot be satisfied, i.e. $H_3$ is quasi-Folklman, while \glucose~ requires 10 seconds.  

Because these solving times are so low, we were convinced that $H_3$ has the quasi-Folkman property with a wide margin.
We did some experiments and observed that up to three maximum cliques which do not have a vertex in common can be removed while retaining the quasi-Folkman property.
The remaining graph has 39 vertices and 1488 triangles, of which 898 are non-degenerate.

Since $H_3$ is smaller than the smallest known $K_4$-free Folkman graph, we performed a number of computational experiments which try to turn $H_3$ into a \textit{true} Folkman graph (see Problem~\ref{problem: true-folkman} above).
To this end, we tried:
\begin{enumerate}
    \item replacing every maximum clique of $H_3$ with a complete bipartite graph. That is, for each maximum clique, independently pick a random permutation of its vertices and then replace the edges by those of a predefined bipartite graph;
    \item replacing every maximum clique of $H_3$ with a random maximal-triangle-free graph, in a similar way;
    \item one by one removing random edges from $H_3$ until the graph becomes $K_4$-free;
    \item one by one greedily removing the edge that is contained in the maximum amount of $K_4$'s, until the graph becomes $K_4$-free; and
    \item randomly adding non-degenerate triangles of $H_3$ to an initially empty graph, as long as the graph stays $K_4$-free.
\end{enumerate}
Every experiment was repeated thousands of times, but this never resulted in a genuine Folkman graph. Moreover, most graphs constructed via techniques (1),(2) and (3) appeared to be `trivially' colorable in the following sense: every edge that is contained in at most one triangle can never contribute to a graph being non-colorable, so it can be removed from the graph.  This in turn can cause more edges to be in at most one triangle, so we repeat the process as long as necessary.  Also, vertices with degree lower than 8 can be removed if the graph is $K_4$-free~\cite{BN20}.  Graphs from the first three experiments were then most often simplified to the empty graph, while graphs derived from experiments (4) and (5) appear to have very few removable vertices and edges.

We also did an experiment to find quasi-Folkman graphs of small order, unrelated to $H_3$.
First we guess a small order $N$, and then make a set $\scT$ of triangles by adding random triples from $\{1\dots N \}$ as long as no 4 elements in $\scT$ span only 4 points.
After only about a hundred tries, this procedure found a quasi-Folkman graph on 12 vertices.
It turned out that there are also \textit{circulant} quasi-Folkman graphs on 11 vertices, having $|\scT| = 88$.
One example is when $|\scT|$ equals all circular rotations of $$\{1,2,3\},\{1,2,4\},\{1,2,6\},\{1,2,7\},\{1,3,5\},\{1,3,8\},\{1,4,7\},\{1,4,8\}$$

The code for the experiments described in the appendix can be found at \url{https://github.com/Steven-VO/quasiFolkman}.

\end{document}